\documentclass[11pt, reqno]{amsart}
\usepackage{amsmath, amssymb, amsthm, graphicx,marvosym}
\usepackage[nobysame]{amsrefs}[11pts]

\usepackage{color}

\setlength{\evensidemargin}{.5in} \setlength{\oddsidemargin}{.5in}
\setlength{\textwidth}{5.5in} \setlength{\topmargin}{0in}
\setlength{\textheight}{8.5in}

\usepackage{tikz}
\usepackage{caption}
\usepackage{amsmath}
\usepackage{amssymb}
\usepackage{amsthm}
\usepackage{enumerate}
\newtheorem{thm}{Theorem}
\newtheorem{lemma}{Lemma}[section]

\newtheorem{prop}[lemma]{Proposition}
\newtheorem{definition}{Definition}
\newtheorem{remark}{Remark}
\newcommand \nc{\newcommand}
\newcommand{\ben}{\begin{eqnarray}}
\newcommand{\een}{\end{eqnarray}}
\newcommand{\beno}{\begin{eqnarray*}}
\newcommand{\eeno}{\end{eqnarray*}}

\makeatletter \@addtoreset{equation}{section} \makeatother



\nc{\va}{\varphi}
\nc{\ve}{\varepsilon}

\def\ds{\displaystyle}

\newcommand{\R}{\mathbb {R}}

\def\({\left(\begin{array}{cccccc}}
\def\){\end{array}\right)}

\def\bes{\begin{eqnarray}}
\def\ees{\end{eqnarray}}


\title[parabolic PDE with a non-constant H\"older diffusion Coefficient]{On parabolic partial differential equations with H\"older continuous diffusion Coefficients}

\author{Majed Sofiani}
\address[M. Sofiani]{Department of Mathematics, University of Kansas, Lawrence, KS 66045, U.S.A.}
\email{\tt sofiani@ku.edu}

\date{\today}

\begin{document}
\maketitle

\begin{abstract}
We investigate existence and regularity of weak solutions of a 1-dimensional parabolic differential equation with a non-constant H\"older diffusion coefficient and a rough forcing term. Such an equation appears in studying the 1-dimensional Ericksen-Leslie model for nematic liquid crystals where our result applies. The result presented here uses the H\"older continuity of the diffusion coefficient which comes from the physical background and the analysis of the Ericksen-Leslie model. Moreover, the dependence of the H\"older exponent of the solution is explicit on the H\"older exponent of the diffusion coefficient.
    
\end{abstract}



\section{\textbf{Introduction}}
Consider the following Cauchy problem
\begin{align}\label{chy}
    w_t-k(x,t)w_{xx}+\gamma w&=f(x,t)+G(x,t)\quad \text{on}\quad  \Omega_T,\\\label{chyinit}
    w(x,0)&=w_0(x)\quad \text{at}\quad t=0,
\end{align}
where $T>0$ is arbitrary but fixed and  $\Omega_T:=\R\times (0,T].$ We assume
\begin{align}\begin{split}\label{assump}
    k(x,t):&\Omega_T\to \R^+ \in C^{1/2}(\Omega_T),\\
    k_x(x,t)&\in L^{\infty}([0,T],L^2(\R)),
    \end{split}
\end{align}
and for some $k_L,k_U>0,$
$$k_L\leq k(x,t)\leq k_U;$$
\begin{align}
    \begin{split}\label{source}
    f(x,t): \overline{\Omega}_T\to \R& \in L^\infty([0,T],L^2(\R));\\
    G(x,t):\overline{\Omega}_T\to \R& \in L^\infty([0,T],C(\R))\cap L^\infty(\Omega_T);\\
    G_x(x,t)&\in L^\infty([0,T],L^1(\R));\\
    \gamma&\in \R;
    \end{split}
\end{align}
and
\begin{align}\label{init}
    w(x,0)=w_0(x)\in H^1(\R).
\end{align}
\begin{remark}\label{G}
The function $G$ has a specific form, namely
\[G(x,t)=\int_{-\infty}^xf^2(z,t)\,dz,\] where $f$ is the function in \eqref{chy}. 
\end{remark}

Equation \eqref{chy} with the specific data \eqref{assump} and \eqref{source} appears in studying the Poiseuille flows of nematic liquid crystals via the Ericksen-Leslie model \cites{ericksen62,leslie68, Les, lin89, CHL20,oseen33,frank58}. This work is particularly inspired by the relatively recent work in \cite{CHL20}. A brief background of equation \eqref{chy} will be presented in \S\ref{1.1}.\\

To state our main result, we need to introduce
\begin{definition}\label{def}
A function $w(x,t)$ is a weak solution to \eqref{chy} and \eqref{chyinit} if the following holds,
\begin{enumerate}
    \item For any $\alpha\in (0,1/4),$
    $$w(x,t),w_x(x,t)\in L^\infty\cap C^\alpha(\Omega_T);$$

    \item for any $\phi\in C_c^1(\Omega_T),$
    \begin{align}
        \int_0^T\int_\R-\phi_t w+(k\phi)_x w_x+\gamma \phi w\,dx\,dt=\int_0^T\int_\R(f+G)\phi\,dx\,dt;
    \end{align}

    \item The initial data is satisfied in the following sense, $$w(x,t)\to w_0(x) \quad \text{point-wise as} t\to 0^+.$$
\end{enumerate}
\end{definition}

\begin{thm}
For any fixed $T>0,$ under the assumptions \eqref{assump}-\eqref{init} the Cauchy problem \eqref{chy} and \eqref{chyinit} has a weak solution in the sense of {Definition \ref{def}} over the domain $\Omega_T.$
\end{thm}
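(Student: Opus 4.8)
The plan is to construct the weak solution via a regularization/freezing argument and a compactness passage to the limit. Because the diffusion coefficient is only $C^{1/2}$, we cannot invoke classical Schauder theory directly; instead I would first mollify the data and freeze the coefficient to obtain smooth approximate solutions, then derive uniform estimates strong enough to extract a limit and to obtain the claimed $C^\alpha$ regularity for $w$ and $w_x$ with $\alpha\in(0,1/4)$.

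Concretely, first I would approximate: replace $k$ by a smooth $k^\e$ with $k_L\le k^\e\le k_U$ and $k^\e\to k$ locally uniformly, with $k^\e_x$ bounded in $L^\infty_tL^2_x$ uniformly in $\e$; similarly mollify $f,G,w_0$ to smooth compactly supported (or Schwartz) data $f^\e,G^\e,w_0^\e$, keeping the bounds in \eqref{source} and \eqref{init} uniform. For each $\e$ the linear parabolic equation $w^\e_t-k^\e w^\e_{xx}+\gamma w^\e=f^\e+G^\e$ with initial data $w_0^\e$ has a unique smooth solution by standard linear theory. The second step is the \emph{energy estimates}: multiply by $w^\e$ and integrate to control $\|w^\e\|_{L^\infty_tL^2_x}$ and $\|\sqrt{k^\e}\,w^\e_x\|_{L^2_{x,t}}$, using $G^\e\in L^\infty$; then differentiate the equation in $x$ (or multiply by $-w^\e_{xx}$) to get a bound on $\|w^\e_x\|_{L^\infty_tL^2_x}$ and $\|w^\e_{xx}\|_{L^2_{x,t}}$ — here the term $k^\e_x w^\e_{xx}w^\e_x$ is handled by the uniform $L^\infty_tL^2_x$ bound on $k^\e_x$ together with the Gagliardo–Nirenberg inequality in one space dimension, $\|w^\e_x\|_{L^\infty_x}\lesssim \|w^\e_x\|_{L^2_x}^{1/2}\|w^\e_{xx}\|_{L^2_x}^{1/2}+\|w^\e_x\|_{L^2_x}$, absorbing the top-order term into the good term and closing a Grönwall inequality. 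These give $w^\e$ bounded in $L^\infty([0,T];H^1(\R))$ with $w^\e_{xx}\in L^2(\Omega_T)$ and, from the equation, $w^\e_t\in L^2(\Omega_T)$.

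The third step converts these Sobolev bounds into the advertised Hölder bounds. From $w^\e\in L^\infty_tH^1_x$ and $w^\e_t\in L^2_{x,t}$, an Aubin–Lions/interpolation argument gives equicontinuity of $w^\e$ in $(x,t)$; quantitatively, one-dimensional Sobolev embedding gives spatial $C^{1/2}$ in $x$ uniformly, while in time $\|w^\e(\cdot,t)-w^\e(\cdot,s)\|_{L^2_x}\le|t-s|^{1/2}\|w^\e_t\|_{L^2_{x,t}}$, and interpolating with the $H^1$ bound yields a uniform modulus of continuity; combined carefully this produces the uniform parabolic Hölder bound with exponent up to $1/4$ (the $1/4$ being the natural parabolic scaling exponent $\tfrac12\cdot\tfrac12$ coming from the $L^2$-in-time control of $w^\e_t$). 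Treating $w^\e_x$ similarly — using the equation to write $w^\e_{xt}$ in terms of known quantities, or applying the De Giorgi–Nash–Moser estimate for the divergence-form equation satisfied by $w^\e$ with bounded measurable coefficient $k^\e$ — yields a uniform $C^\alpha$ bound on $w^\e_x$ for $\alpha\in(0,1/4)$. Then by Arzelà–Ascoli (on compact subsets of $\Omega_T$, plus a tail/decay argument at spatial infinity using the $L^2$ control) extract a subsequence with $w^\e\to w$ and $w^\e_x\to w_x$ locally uniformly, with $w^\e\rightharpoonup w$ in $H^1$ and $w^\e_{xx}\rightharpoonup w_{xx}$ weakly in $L^2$.

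The final step is passing to the limit in the weak formulation: for fixed $\phi\in C_c^1(\Omega_T)$, the terms $\int\!\!\int -\phi_t w^\e$, $\int\!\!\int\gamma\phi w^\e$, and $\int\!\!\int(f^\e+G^\e)\phi$ pass by strong/weak convergence, and the term $\int\!\!\int (k^\e\phi)_x w^\e_x=\int\!\!\int k^\e_x\phi w^\e_x+k^\e\phi_x w^\e_x$ passes because $k^\e\to k$ locally uniformly, $k^\e_x\rightharpoonup k_x$ in $L^2$ (against the $L^2$-convergent $\phi w^\e_x$), and $w^\e_x\to w_x$ locally uniformly. The initial condition is recovered from the time-equicontinuity in the third step. \textbf{The main obstacle} I anticipate is the top-order estimate in step two: controlling $\int k^\e_x w^\e_{xx} w^\e_x\,dx$ only from $k^\e_x\in L^\infty_tL^2_x$ requires the one-dimensional Gagliardo–Nirenberg interpolation and a delicate absorption, and it is precisely the $L^2$ (rather than $L^\infty$) nature of $k_x$ together with the $C^{1/2}$ (rather than Lipschitz) regularity of $k$ that caps the attainable Hölder exponent at $1/4$ and forces the open interval $(0,1/4)$ in Definition \ref{def}; making the dependence of $\alpha$ on the data explicit, as promised in the abstract, will require tracking constants carefully through this interpolation.
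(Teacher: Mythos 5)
Your proposal replaces the paper's parametrix/Duhamel representation with an energy-method and compactness scheme, but it has two genuine gaps that the paper's structure is specifically designed to avoid. First, the very first energy estimate does not close: the forcing term $G(x,t)=\int_{-\infty}^x f^2(z,t)\,dz$ is bounded but does \emph{not} decay as $x\to+\infty$ (it tends to $\|f(\cdot,t)\|_{L^2}^2$), so $G\notin L^2_x$ and the term $\int_\R G^\e w^\e\,dx$ cannot be controlled by $\|w^\e\|_{L^2_x}$ and $\|G^\e\|_{L^\infty}$; indeed the solution itself is not expected to lie in $L^2_x$ (the paper notes precisely this ``lack of decay of the source term $G$'' and therefore only claims $L^2$ bounds for $W_f, W_{f,x}, W_{G,x}$, never for $W_G$ or $w$). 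Your uniform $L^\infty_tH^1_x$ bound, on which the entire Aubin--Lions/Arzel\`a--Ascoli compactness step rests, is therefore not available as stated; one would at least have to integrate by parts against $G_x=f^2\in L^\infty_tL^1_x$ or work in uniformly local spaces, none of which you address.

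Second, and more fundamentally, the claimed uniform $C^\alpha$ bound on $w^\e_x$ is not justified. Differentiating the equation in $x$ produces $f_x$, which is not controlled at all ($f$ is only $L^\infty_tL^2_x$), and De Giorgi--Nash--Moser/Krylov--Safonov with merely bounded measurable coefficients yields H\"older continuity of the \emph{solution}, not of its gradient; gradient H\"older estimates in general require H\"older (Schauder-type) coefficient regularity together with better forcing than $L^\infty_tL^2_x$. Obtaining $C^\alpha$ bounds on $w_x$ despite the rough forcing is exactly the crux of the theorem, and the paper gets them not from Sobolev embedding but from the explicit Levi parametrix of Friedman for the H\"older coefficient $k\in C^{1/2}$: Gaussian bounds on $\Gamma$, $\Gamma_x$ and on the correction kernel $\Phi$, combined with Cauchy--Schwarz in the kernel integrals against $f\in L^\infty_tL^2_x$, which is also where the explicit restriction $\alpha<1/4$ (via the constraint $\tfrac14<\epsilon<\tfrac12-\alpha$) comes from. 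In the paper, mollification is applied only to $f$ and $G$ (not to $k$) in order to invoke the classical solvability of \cite{Fri} and then pass to the limit in the weak formulation using the uniform $L^\infty$ bounds on $w^\e,w^\e_x$; your scheme of mollifying $k$ and relying on energy estimates plus DGNM does not supply the estimates the definition of weak solution requires.
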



\subsection{\textbf{{Physical background and motivation}}}\label{1.1}

As mentioned above, this work is motivated by a study of Ericksen-Leslie model for Poiseulle flow of nematic liquid crystals. The state variables for nematics are the velocity vector $\bf u$ and the director field $\bf n.$
In the case where the flow moves in the $z$-direction and is uniform in $x,y$ and the molecules lie on the $x-z$ plane, we write the velocity field as $\mathbf{u}(x,t)=(0,0,u(x,t))^T$ and the director field as $\mathbf{n}(x,t)=(\sin\theta(x,t),0,\cos\theta(x,t))^T,$ where $\theta$ is the angle measured from the $z$ axis. The system reduces to
\begin{align}\label{ugh}\begin{split}
    \ds u_t&=\left(g(\theta)u_x+h(\theta)\theta_t\right)_x,\\
\theta_{tt}+\gamma_1\theta_t&=c(\theta)\big(c(\theta)\theta_x\big)_x-h(\theta)u_x,
\end{split}
\end{align}
where the functions $c,$ $g,$ and $h$ are explicitly given in the literature
\begin{align}\label{fgh}\begin{split}
 g(\theta):=&\alpha_1\sin^2\theta\cos^2\theta+\frac{\alpha_5-\alpha_2}{2}\sin^2\theta+\frac{\alpha_3+\alpha_6}{2}\cos^2\theta+\frac{\alpha_4}{2},\\
  h(\theta):=&\alpha_3\cos^2\theta-\alpha_2\sin^2\theta=\frac{\gamma_1+\gamma_2\cos(2\theta) }{2},\\
  c^2(\theta):=&K_1\cos^2\theta+K_3\sin^2\theta.
 \end{split}
 \end{align}
 In \eqref{fgh}, $K_1, K_3$ are Frank's constants for splay and bending deformation of the material and $\alpha_j's$ with ($\gamma_1=\alpha_3-\alpha_2,\, \gamma_2=\alpha_6-\alpha_5$) are the Leslie dynamics parameters. See \cite{CHL20} for a detailed discussion and derivation of the Ericksen-Leslie system for Poiseulle flows.\\

In \cite{CHL20}, a special case ($g=h=1$ and $\gamma_1=2$) was investigated and a crucial quantity $J:=u_x+\theta_t$ was realized. It turns out that $J$ has better regularity than $\theta_t$ and $u_x$ individually. That is, each of $\theta_t$ and $u_x$ might blow up in finite time but $J$ will always be bounded and in fact H\"older continuous. The system is written as
\begin{align}\label{uj}
    \ds u_t&=\left(u_x+\theta_t\right)_x,\\
\theta_{tt}+2\theta_t&=c(\theta)\big(c(\theta)\theta_x\big)_x-u_x.
\end{align}
In terms of $J,$
\begin{align}\label{ujj}
  u_t&=J_x,\\\label{thetaj}
 \theta_{tt}+\theta_t&=c(\theta)(c(\theta)\theta_x)_x-J.
\end{align}
To close the system, using the definition of $J$ and \eqref{ujj}, it is easy to see that
\[J_t=J_{xx}+\theta_{tt}.\]
Using the wave equation we obtain an equation for $J$
\begin{align}\label{J}
   J_t=J_{xx}+c(\theta)(c(\theta)\theta_x)_x-u_x-\theta_{t}. 
\end{align}
This parabolic PDE \eqref{J} has a corresponding parabolic PDE for the general case \eqref{ugh}, with non-constant $g$ and $h,$ satisfied by a quantity $A$ expressed as follows
$$A(x,t):=\int_{-\infty}^xJ(z,t)\,dz$$
where $J$ in this case is slightly modified,
$$J:=u_x+\frac{h}{g}\theta_t.$$
For full context of the derivation of the quantity and its equation we refer the readers to \cite{CLS}.\\

The parabolic PDE satisfied by $A$ has a similar form as equation \eqref{chy} considered in this paper. We will investigate this equation in a general setup that is closely related to the situation for the system \eqref{ugh}. The results presented here are crucial for the proof of the global existence of weak solution for the full Ericksen-Leslie  model  for Poiseulle flows of nematic liquid crystals in \cite{CHL20}. That is mainly because the major difference is the appearance of the function $g(\theta)$ as a diffusion coefficient in the equations for the quantities $v$ and $A$ defined above.\\



\subsection{\textbf{A relevant work from \cite{Fri} and Duhamel formula.}}\label{1.2}
 In this subsection we recall a result from \cite{Fri} on the existence of classical solutions to parabolic PDEs. The framework therein is relevant and is our starting point for the analysis in this work. The results in \cite{Fri} are shown for the n-dimensional space and any H\"older exponent $0<\beta<1.$ We focus on the relevant situation when $n=1$ and $\beta=1/2.$ 
 \\
 
 Let $k(x,t)\in C^{\beta}$ with respect to both variables on $\Omega_T.$
 Consider, the following homogeneous equation
 \begin{align}\label{homog}
   w_t-k(x,t)w_{xx}+\gamma w=0.  
 \end{align}
Note that, for fixed $(\xi,\tau),$ the function
\begin{align}
    H^{\xi,\tau}(x-\xi,t-\tau)=\frac{1}{2\sqrt{k(\xi,\tau)}\sqrt{t-\tau}}e^{-\frac{(x-\xi)^2}{4k(\xi,\tau)(t-\tau)}}
\end{align} 
is the heat kernel with frozen coefficient $k(\xi,\tau),$ that is,
\[H_t^{\xi,\tau}-k(\xi,\tau)H_{xx}^{\xi,\tau}=0.\]
We can write the fundamental solution $\Gamma$ of \eqref{homog} implicitly as follows

\begin{align}\label{GGGamma}
\Gamma(x,t,\xi,\tau)&=H^{\xi,\tau}(x-\xi,t-\tau)
+\int_\tau^t\int_\mathbb{R}H^{y,s}(x-y,t-s)\Phi(y,s;\xi,\tau)\,dy\,ds,
\end{align}
where the function $\Phi$ is determined by the condition that $$\Gamma_t-k(x,t)\Gamma_{xx}+\gamma\Gamma=0.$$
In \cite{Fri}, the author showed such function $\Phi$ exists and satisfies
    \begin{align}\label{phiest}
|\Phi(y,s;\xi,\tau)|\leq \frac{C}{(s-\tau)^{\frac{3-\beta}{2}}}e^{\frac{-d(y-\xi)^2}{4(s-\tau)}}=\frac{C}{(s-\tau)^{\frac{5}{4}}}e^{\frac{-d(y-\xi)^2}{4(s-\tau)}},
\end{align}
where $C$ and $d$ are constants depending only on $k.$ We will use $C$ to represent other constants that are not necessarily the same. In addition, the author showed that if $q(x,t)$ is H\"older continuous on $\Omega_T$ and $\psi_0(x)$ is continuous on $\R$, then the solution of the following Cauchy problem
\[-u_t+k(x,t)u_{xx}-\gamma u=q(x,t),\]
\[u(x,0)=\psi_0(x),\]
 is given by the Duhamel formula
\begin{align}\label{sol}
     u(x,t)=\int_\R\Gamma(x,t,\xi,0)\psi_0(\xi)\,d\xi+\int_0^t\int_\R\Gamma(x,t;\xi.\tau)q(\xi,\tau)\,d\xi\,d\tau,
 \end{align}
and it is a classical solution.\\

In addition, the following estimates were shown therein,
\begin{align}\label{Gammaest}
    |\Gamma(x,t;\xi,\tau)|\leq \frac{C}{\sqrt{t-\tau}}e^{-\frac{d(x-\xi)^2}{4(t-\tau)}}\approx H(x-\xi,t-\tau),
\end{align}
\begin{align}\label{Gamma_xest}
    |\Gamma_x(x,t;\xi,\tau)|\leq \frac{C}{{t-\tau}}e^{-\frac{d(x-\xi)^2}{4(t-\tau)}}\approx\frac{1}{\sqrt{t-\tau}}H(x-\xi,t-\tau),
\end{align}
where $d$ is a constant depending on $k$ and $\approx$ means equality up to a factor depending on $k(\xi,\tau).$

\subsection{\textbf{The main idea of this work}}
 For the application in our work \cite{CLS}, we encounter a rougher, particularly not H\"older continuous, data for the non-homogeneous term $q(x,t).$ For this reason, the result in \cite{Fri} can not be applied. Nevertheless, an extension of the framework in \cite{Fri} is needed in the sense of proving the existence of a weak solution in the case when a rough source data imposed to the PDE.
 In addition, H\"older estimates for the weak solution and its $x$-derivative are needed for the work in \cite{CLS}. 
 We estimate the $L^\infty, L^2$ and the $C^\alpha$ norms of the solution and its derivative with respect to $x.$ We use the mollifying technique to prove the existence of weak solution. Precisely, we mollify the data $f$ and $G$ and then pass to the limit with respect to the $L^\infty$ norm. Briefly, if we mollify $f$ and $G$ we can write the solution in a classical sense following \cite{Fri}. Then, using the $L^\infty$ estimates of $u$ and $u_x$ we can take the limit of the weak formulation with respect to the mollifier parameter. It is worth mentioning that H\"older continuity of solutions to parabolic PDE's has been extensively studied and many results have been shown, for example \cites{krylov-saf,lihewang} and the references there. However, since the equation here is motivated by a concrete model as discussed before,
 the result here assumes H\"older continuity of the coefficient and gives more explicit formula for the solution and also shows the dependence on the H\"older exponent of the coefficient.\\
 
Following the discussion in \S\ref{1.2}, we can represent the solution to \eqref{chy} and \eqref{chyinit} as follows,
\begin{align}\label{soll}
w(x,t)=\int_\R\Gamma(x,t,\xi,0)w_0(\xi)\,d\xi+\int_0^t\int_\R\Gamma(x,t;\xi.\tau)[G(\xi,\tau)+f(\xi,\tau)]\,d\xi\,d\tau.
 \end{align}
We will show that this representation of the solution indeed satisfies the weak formulation in Definition \ref{def}. We will consider the three terms of the expression \eqref{soll} separately and denote each term as follows,
\begin{align*}
W(x,t):=\int_\R\Gamma(x,t,\xi,0)w_0(\xi)\,d\xi,
\end{align*}
\begin{align*}
W_G(x,t):=\int_0^t\int_\R\Gamma(x,t;\xi.\tau)G(\xi,\tau)\,d\xi\,d\tau,
\end{align*}
and
\begin{align*}
W_f(x,t):=\int_0^t\int_\R\Gamma(x,t;\xi.\tau)f(\xi,\tau)\,d\xi\,d\tau.
\end{align*}
Analogously, denote the $x$ derivative of the expressions as 
\begin{align*}
W_x(x,t):=\int_\R\Gamma_x(x,t,\xi,0)w_0(\xi)\,d\xi.
\end{align*}
\begin{align*}
    W_{G,x}(x,t):=\int_0^t\int_\R\Gamma_x(x,t;\xi.\tau)G(\xi,\tau)\,d\xi\,d\tau,
\end{align*}
and
\begin{align*}
W_{f,x}(x,t):=\int_0^t\int_\R\Gamma_x(x,t;\xi.\tau)f(\xi,\tau)\,d\xi\,d\tau.
\end{align*}
The key estimates on the solution $w$ are presented in the next propositions.

\begin{prop}\label{prop1}
Fix $T>0.$
Under the assumptions \eqref{assump}- \eqref{init}, the functions $W_G, W_f, W_{G,x},$ and $W_{f,x}$ belong to the following space,
\[L^\infty\cap C^\alpha(\bar \Omega_T),\] for any $0<\alpha<1/4.$ 
Moreover, for any compact set $F\subset \Omega_T,$
one has
\[\max\big\{\|W\|_{(L^\infty\cap C^\alpha)(F)},\|W_x\|_{(L^\infty\cap C^\alpha)(F)}\big\}<\infty,\]
and
\[W(x,t),W_x(x,t)\to w_0(x), w'_0(x) \mbox{ point-wise as } t\to 0^+.\] 
\end{prop}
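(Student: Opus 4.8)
The proof splits into two independent pieces: (i) the global estimates on $W_G, W_f$ and their $x$-derivatives on all of $\bar\Omega_T$, and (ii) the local estimates on $W, W_x$ together with the convergence to the initial data as $t\to0^+$. For piece (i) the engine is the pointwise bounds \eqref{Gammaest}--\eqref{Gamma_xest} on $\Gamma$ and $\Gamma_x$, which reduce everything to heat-kernel computations. For the $L^\infty$ bound on $W_f$, I would apply Cauchy--Schwarz in $\xi$ against $f(\cdot,\tau)\in L^2(\R)$, using $\|H(\cdot,t-\tau)\|_{L^2_\xi}^2\lesssim (t-\tau)^{-1/2}$, which leaves $\int_0^t (t-\tau)^{-1/2}\|f(\tau)\|_{L^2}\,d\tau\lesssim \sqrt{T}\,\|f\|_{L^\infty_tL^2_x}<\infty$. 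For $W_G$ I would instead use $\|G\|_{L^\infty(\Omega_T)}$ directly against $\|H(\cdot,t-\tau)\|_{L^1_\xi}\lesssim 1$, giving a bound of order $T$. The $x$-derivatives carry the extra factor $(t-\tau)^{-1/2}$: for $W_{G,x}$ this gives $\int_0^t(t-\tau)^{-1/2}\,d\tau\lesssim\sqrt T$, still integrable; for $W_{f,x}$ the naive estimate yields $\int_0^t(t-\tau)^{-1/2}\cdot(t-\tau)^{-1/4}\|f(\tau)\|_{L^2}\,d\tau$ from $\|(t-\tau)^{-1/2}H\|_{L^2_\xi}\lesssim(t-\tau)^{-3/4}$, and $-3/4>-1$ so the $\tau$-integral still converges.

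**Hölder continuity.** For the $C^\alpha$ bounds, $0<\alpha<1/4$, I would estimate the space- and time-increments of each term separately. The standard device is to split the $\tau$-integral at a cutoff depending on the increment size: for a time increment $w(x,t+\delta)-w(x,t)$ one writes the near-singular part $\tau\in(t,t+\delta)$ and the far part $\tau\in(0,t)$, bounding the far part via the mean-value theorem applied to $\Gamma$ (or $H$) in $t$, which costs one more power of $(t-\tau)^{-1}$, and optimizing the split. The same idea handles spatial increments using the Gaussian bound on $\Gamma_x$ (and on $\Gamma_{xx}$, controlled through the representation \eqref{GGGamma} and the $\Phi$-estimate \eqref{phiest}) together with the interpolation $|e^{-a(x)^2}-e^{-a(y)^2}|\lesssim \min\{1, |x-y|/\sqrt{t-\tau}\}$, of which one takes the $2\alpha$-power. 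The exponent $1/4$ — rather than $1/2$ — emerges precisely from the $(s-\tau)^{-5/4}$ singularity in \eqref{phiest} (i.e. from $\beta=1/2$): the worst term, the one involving $\Gamma_x$ against $f\in L^2$, produces after the split and optimization a gain of order $\delta^{1/4-\eps}$, which is the bottleneck.

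**Initial data and the local part.** For $W(x,t)=\int_\R\Gamma(x,t,\xi,0)w_0(\xi)\,d\xi$, the point is that $\Gamma(x,t,\xi,0)$ behaves like the heat kernel $H^{\xi,0}(x-\xi,t)$ plus the better-behaved $\Phi$-correction; since $w_0\in H^1(\R)\hookrightarrow C^{1/2}(\R)\cap L^\infty$, standard approximate-identity arguments give $W(x,t)\to w_0(x)$ pointwise (indeed locally uniformly) as $t\to0^+$, and the local $L^\infty\cap C^\alpha$ bound on any compact $F\subset\Omega_T$ follows because there $t$ is bounded away from $0$, so $\Gamma$ and $\Gamma_x$ are uniformly bounded and the convolution inherits boundedness and Hölder continuity from $w_0$ and from smoothness of the kernel in $(x,t)$ for $t\ge t_0>0$. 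For $W_x$ I would differentiate under the integral, writing $W_x=\int_\R \del_x\Gamma\cdot w_0\,d\xi = -\int_\R\del_\xi\Gamma\cdot w_0\,d\xi=\int_\R\Gamma\cdot w_0'\,d\xi + (\text{boundary terms that vanish})$ after an integration by parts that is legitimate because $w_0'\in L^2$; then $W_x(x,t)\to w_0'(x)$ pointwise by the same approximate-identity reasoning, now applied to $w_0'$. The main obstacle I anticipate is bookkeeping in the Hölder estimate for $W_{f,x}$: one must simultaneously track the $\Gamma_x$ singularity, the $L^2$-only regularity of $f$, and the $\Phi$-term contribution, and do the cutoff optimization carefully enough to land at every $\alpha<1/4$ (and to see that $1/4$ is sharp for this method). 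Everything else is routine kernel estimation.
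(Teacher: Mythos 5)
Your overall architecture (reduce everything to the Gaussian bounds \eqref{Gammaest}--\eqref{Gamma_xest}, treat $G$ via $\|G\|_{L^\infty}$ and $f$ via Cauchy--Schwarz in $\xi$, handle the $\Phi$-correction separately, and treat $W, W_x$ only locally plus an approximate-identity argument at $t=0$) matches the paper, and your $L^\infty$ estimates are essentially the paper's (the paper implements your $(t-\tau)^{-3/4}$ bookkeeping as a weighted H\"older inequality with $r=3/8$). The gap is in the $C^\alpha$ step. Your near/far splitting with the mean value theorem requires Gaussian bounds on $\Gamma_{xx}$ (for space increments) and on $t$-derivatives of $\Gamma_x$ (for time increments); neither is among the quoted estimates, and your parenthetical claim that $\Gamma_{xx}$ is ``controlled through \eqref{GGGamma} and \eqref{phiest}'' does not work naively: putting two $x$-derivatives on $H^{y,s}$ in the integral term and pairing with \eqref{phiest} produces an $(t-s)^{-1}$-type singularity at $s=t$ whose $s$-integral diverges, and Friedman's actual derivation of second-derivative bounds needs additional cancellation arguments. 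The paper avoids second derivatives entirely: it estimates the H\"older quotient of $H_x$ directly, splitting the difference of Gaussians via the elementary inequality $1-e^{-a}\le \lambda^{-1}a^{\lambda}$ (Lemma \ref{lemma-e}) with $\lambda=\tfrac12-\epsilon$, so only the bounds on $\Gamma$, $\Gamma_x$ and $\Phi$ are ever used; time-H\"older is then inferred from parabolic scaling rather than a separate MVT-in-$t$ argument. Relatedly, your diagnosis of the threshold is off: in the paper's proof the restriction $\alpha<1/4$ does not come from the $(s-\tau)^{-5/4}$ singularity in \eqref{phiest} (the lower-order term $IV$ only contributes a bounded extra factor); it comes from the principal term $II_{f,x}$, where the exponent $\lambda=\tfrac12-\epsilon$ must satisfy $\epsilon<\tfrac12-\alpha$ while the Cauchy--Schwarz in $\xi$ forced by $f\in L^\infty([0,T],L^2(\R))$ requires $\epsilon>\tfrac14$, whence $\alpha<\tfrac14$. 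If you intend to reach every $\alpha<1/4$ by your splitting method you must actually carry out that optimization and justify the kernel bounds it uses; as written this is the crucial unproved step.

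The second genuine gap is the initial-data argument for $W_x$. The identity $\partial_x\Gamma(x,t;\xi,0)=-\partial_\xi\Gamma(x,t;\xi,0)$ underlying your integration by parts is false for variable coefficients: already for the parametrix $H^{\xi,\tau}(x-\xi,t-\tau)$ the $\xi$-derivative also falls on $k(\xi,\tau)$ in the prefactor and in the exponent, and the full $\Gamma$ has no such symmetry, so the discrepancy is not a vanishing boundary term (and $k$ is only H\"older in $x$, with $k_x$ merely in $L^\infty([0,T],L^2(\R))$, so these extra terms are not negligible). The paper's route is different and simpler: since $W$ solves the homogeneous equation classically, $u:=W_x$ solves the divergence-form problem $u_t-(k u_x)_x+\gamma u=0$ with $u(\cdot,0)=w_0'\in L^2(\R)$, and standard $L^2$ parabolic theory ($u\in C([0,T];L^2)$) recovers the initial data; your approximate-identity argument for $W$ itself, using $w_0\in H^1(\R)\hookrightarrow C^{1/2}(\R)$, is fine and corresponds to the paper's citation of Friedman.
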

As a consequence of the first assertion in  Proposition \ref{prop1}, we have that for any constant $K>0,$ there exists $\delta>0,$ depending on $G,f$ and $k,$ sufficiently small such that
\[\|W_{\{\square\}}(x,t)\|_{(L^\infty\cap C^\alpha)(\overline \Omega_\delta)}\leq K,\]
for $\{\square\}:=\{G\},\{f\},\{G,x\}$ and $\{f,x\}.$\\

Due to the lack of decay of the source term $G,$ we can only obtain $L^2$ decay of the terms $W_f,W_{f,x}$ and $W_{G,x}.$ More precisely,
\begin{prop}\label{prop}
 For any constant $K>0,$ there exists $\delta>0,$ depending on $G,f$ and $k,$ sufficiently small such that
\[\|W_{\{\circ\}}(x,t)\|_{L^2(\overline \Omega_\delta)}\leq K,\]
for $\{\circ\}:=\{f\},\{f,x\},$ and \ $\{G,x\}.$
\end{prop}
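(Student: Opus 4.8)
The plan is to bound, for each fixed $t\in(0,\delta]$, the spatial norm $\|W_{\{\circ\}}(\cdot,t)\|_{L^2(\R)}$ and then integrate in $t$; since every bound will turn out to be a positive power of $\delta$ times a constant $C(f,k)$, taking $\delta$ small enough forces $\|W_{\{\circ\}}\|_{L^2(\overline\Omega_\delta)}\le K$. For $W_f$ and $W_{f,x}$ this is immediate from \eqref{Gammaest} and \eqref{Gamma_xest}: $W_f(x,t)$ is a $\tau$-integral of the spatial convolution of $f(\cdot,\tau)$ with the Gaussian appearing in \eqref{Gammaest}, whose $L^1(\R)$-norm in the space variable is $O(1)$; Minkowski's integral inequality in $\tau$ together with Young's convolution inequality ($L^1\ast L^2\to L^2$) gives $\|W_f(\cdot,t)\|_{L^2(\R)}\lesssim t\,\|f\|_{L^\infty([0,T],L^2(\R))}$, and the same argument with \eqref{Gamma_xest}, whose Gaussian has spatial $L^1(\R)$-norm $O((t-\tau)^{-1/2})$, gives $\|W_{f,x}(\cdot,t)\|_{L^2(\R)}\lesssim t^{1/2}\,\|f\|_{L^\infty([0,T],L^2(\R))}$. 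Integrating over $t\in(0,\delta]$ yields $\|W_f\|_{L^2(\overline\Omega_\delta)}\lesssim\delta^{3/2}$ and $\|W_{f,x}\|_{L^2(\overline\Omega_\delta)}\lesssim\delta$.

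The term $W_{G,x}$ is the main obstacle, because $G(\cdot,\tau)$ is merely bounded — by Remark \ref{G} it tends to $\|f(\cdot,\tau)\|_{L^2(\R)}^2$ as $x\to+\infty$ — and is not in $L^2(\R)$, so the strategy above only controls $\|W_{G,x}(\cdot,t)\|_{L^\infty(\R)}$, not the $L^2(\R)$-norm. To produce spatial decay I would first record the cancellation identity $\int_\R\Gamma_x(x,t;\xi,\tau)\,d\xi=0$ for $0\le\tau<t$, which holds because $x\mapsto\int_\R\Gamma(x,t;\xi,\tau)\,d\xi$ is a bounded solution of the homogeneous equation \eqref{homog} with constant profile at $t=\tau$, hence independent of $x$; differentiating under the integral and invoking uniqueness of bounded solutions from \cite{Fri} are both licensed by the Gaussian bounds \eqref{Gammaest}--\eqref{Gamma_xest}. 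Subtracting $G(x,\tau)$,
\[W_{G,x}(x,t)=\int_0^t\int_\R\Gamma_x(x,t;\xi,\tau)\big(G(\xi,\tau)-G(x,\tau)\big)\,d\xi\,d\tau,\]
and now the explicit form $G(\xi,\tau)-G(x,\tau)=\int_x^\xi f^2(z,\tau)\,dz$ from Remark \ref{G} yields the pointwise bound $|G(\xi,\tau)-G(x,\tau)|\le\int_\R f^2(z,\tau)\,\chi_{\{|z-x|\le|\xi-x|\}}\,dz$.

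Inserting this bound and \eqref{Gamma_xest} into the displayed formula and interchanging the $\xi$- and $z$-integrations, the inner $\xi$-integral becomes a Gaussian tail and is $\lesssim(t-\tau)^{-1/2}e^{-d(z-x)^2/(8(t-\tau))}$, so
\[|W_{G,x}(x,t)|\lesssim\int_0^t\int_\R(t-\tau)^{-1/2}e^{-d(z-x)^2/(8(t-\tau))}\,f^2(z,\tau)\,dz\,d\tau .\]
This is once more a $\tau$-integral of a spatial convolution, now of $f^2(\cdot,\tau)\in L^1(\R)$ (with $\|f^2(\cdot,\tau)\|_{L^1(\R)}=\|f(\cdot,\tau)\|_{L^2(\R)}^2$) against a Gaussian whose spatial $L^2(\R)$-norm is $O((t-\tau)^{-1/4})$; Minkowski in $\tau$ and Young's inequality ($L^2\ast L^1\to L^2$) give $\|W_{G,x}(\cdot,t)\|_{L^2(\R)}\lesssim t^{3/4}\,\|f\|_{L^\infty([0,T],L^2(\R))}^2$, and hence $\|W_{G,x}\|_{L^2(\overline\Omega_\delta)}\lesssim\delta^{5/4}\,\|f\|_{L^\infty([0,T],L^2(\R))}^2$. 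Collecting the three estimates, all of the form $C(f,k)\,\delta^{p}$ with $p>0$, and choosing $\delta$ small completes the proof. The delicate points are the cancellation identity and the two interchanges of integration; everything there is controlled by the Gaussian bounds \eqref{Gammaest}--\eqref{Gamma_xest} and dominated convergence.
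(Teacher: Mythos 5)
Your estimates for $W_f$ and $W_{f,x}$ (Minkowski in $\tau$ plus Young's inequality $L^1\ast L^2\to L^2$ using \eqref{Gammaest}--\eqref{Gamma_xest}) are essentially what the paper does for $W_{f,x}$, so that part matches. For the key term $W_{G,x}$, however, you take a genuinely different route. The paper never estimates the kernel representation directly: it mollifies $G$, observes that $W^\ve_{G,x}$ solves the divergence-form equation $(W^\ve_{G,x})_t-\big(k\,(W^\ve_{G,x})_x\big)_x+\gamma W^\ve_{G,x}=G^\ve_x$ classically, and runs an $L^2$ energy estimate in which the right-hand side $\int\!\!\int G^\ve_x\,W^\ve_{G,x}$ is controlled by the structural fact $G_x=f^2\in L^\infty([0,T],L^1(\R))$ together with the $L^\infty$ bound \eqref{infityM_Gx} on $W_{G,x}$ already proved in Proposition \ref{prop1}; it then passes to the limit in $\ve$ by weak $L^2$ compactness. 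You instead exploit the same structural fact pointwise: the cancellation $\int_\R\Gamma_x(x,t;\xi,\tau)\,d\xi=0$ lets you replace $G(\xi,\tau)$ by $G(\xi,\tau)-G(x,\tau)=\int_x^\xi f^2$, and a Gaussian-tail estimate plus Young's inequality yields $\|W_{G,x}(\cdot,t)\|_{L^2(\R)}\lesssim t^{3/4}\|f\|^2_{L^\infty([0,T],L^2)}$ and hence an explicit rate $\delta^{5/4}$. Your computation is correct (the tail bound, the Tonelli interchanges, and the $\delta$-powers all check out), and it buys a purely kernel-level argument with explicit rates, avoiding the mollification, the PDE manipulation, and the weak-limit step; the paper's energy argument, by contrast, needs no cancellation identity but leans on the classical solvability of the mollified problem and on the previously established $L^\infty$ bound. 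One small caveat on your side: to justify that $v(x,t)=\int_\R\Gamma(x,t;\xi,\tau)\,d\xi$ solves \eqref{homog} with $v\to1$ as $t\to\tau^+$ (whence $v=e^{-\gamma(t-\tau)}$ and $\int_\R\Gamma_x\,d\xi=0$), you need Gaussian bounds on $\Gamma_{xx}$, $\Gamma_t$ and the normalization of $\Gamma$ near $t=\tau$, plus uniqueness of bounded solutions; these are all in \cite{Fri} but go beyond the two estimates \eqref{Gammaest}--\eqref{Gamma_xest} quoted in this paper, so they should be cited explicitly rather than attributed to those two bounds alone.
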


The rest of the paper is organised as follows. In {Section 2}, we present a proof of {Proposition \ref{prop1}}. In {Section 3}, we prove the global existence of the weak solution ({Theorem 1}). Finally, we prove {Proposition \ref{prop}} in {Section 4}.

\section{\textbf{Proof of Proposition \ref{prop1}}}
To avoid repetitions, we will show the estimates for $W_{G,x}$ and $W_{f,x}$. The other terms have no derivatives on $\Gamma$ so they have even weaker integrable singularities and can be estimated in a similar fashion.
\subsection{\textbf{The $L^\infty$ estimates}}
In this subsection, we consider the $L^\infty$ estimates for $W_{G,x}$ and $W_{f,x}.$\\

For the term
\begin{align}
W_{G,x}:=\int_0^t\int_\R\Gamma_x(x,t;\xi.\tau)G(\xi,\tau)\,d\xi\,d\tau,
\end{align}
we use \eqref{Gamma_xest} to get,
\begin{align*}
    |W_{G,x}|\lesssim \|G\|_{L^\infty(\Omega_T)}\int_0^t\int_\R\frac{1}{{t-\tau}}e^{-\frac{d(x-\xi)^2}{4(t-\tau)}}\,d\xi\,d\tau\lesssim T^{1/2}\|G\|_{L^\infty(\Omega_T)},
\end{align*}
which yields,
\begin{align}\label{infityM_Gx}
 \|W_{G,x}\|_{L^\infty(\Omega_T)}\lesssim T^{1/2}\|G\|_{L^\infty(\Omega_T)}.   
\end{align}
Similarly, for the term
\begin{align}
W_{f,x}:=\int_0^t\int_\R\Gamma_x(x,t;\xi.\tau)f(\xi,\tau)\,d\xi\,d\tau,
\end{align}
we use \eqref{Gamma_xest} and H\"older inequality,
\begin{align*}
 |W_{f,x}|\lesssim& \bigg[\int_0^t\int_\R\frac{1}{{|t-\tau|}^{2-2r}}e^{-\frac{d(x-\xi)^2}{2(t-\tau)}}\,d\xi\,d\tau\bigg]^{1/2} \bigg[\int_0^t\int_\R\frac{1}{{|t-\tau|}^{2r}}f^2\,d\xi\,d\tau\bigg]^{1/2}\\
 &\lesssim \bigg[\int_0^t\frac{1}{{|t-\tau|}^{\frac{3}{2}-2r}}\,d\tau\bigg]^{1/2}\bigg[\int_0^t\frac{1}{{|t-\tau|}^{2r}}\,d\tau\bigg]^{1/2}\|f\|_{L^\infty((0,T),L^2(\R))}.
\end{align*}
For $r=\frac{3}{8}$ we obtain:
\[|W_{f,x}|\lesssim T^{1/4}\|f\|_{L^\infty((0,T),L^2(\R))},\]
which yields,
\begin{align}\label{infityM_fx}
\|W_{f,x}\|_{L^\infty(\Omega_T)}\lesssim T^{1/4}\|f\|_{L^\infty((0,T),L^2(\R))}. 
\end{align}

\subsection{\textbf{The $C^\alpha$ estimates}}

Recall,
\begin{align}\nonumber
    &\int_0^t\int_{\R}\Gamma_x(x,t;\xi,\tau)\square(\xi,\tau)\,d\xi\,d\tau\\\label{nophi}
    &=\int_0^t\int_{\R}H_x^{\xi,\tau}(x-\xi,t-\tau)\square(\xi,\tau)\,d\xi\,d\tau\\\label{wphi}
    &+\int_0^t\int_{\R}\bigg[\int_\tau^t\int_{\R}H_x^{y,s}(x-y,t-s)\Phi(y,s;\xi,\tau)\,dy\,ds\bigg]\square(\xi,\tau)\,d\xi\,d\tau.
\end{align}

We call \eqref{nophi} the 'principal term' and \eqref{wphi} the 'lower order term.' We need to estimate the $C^\alpha$ norm in $x$ for both terms.\\

Starting with the principal term, we write $H_x^{\xi,\tau}$ explicitly:
\begin{align*}
    H_x^{\xi,\tau}(x-\xi,t-\tau)=-\frac{(x-\xi)}{4\sqrt{\pi}k(\xi,\tau)^{3/2}(t-\tau)^{3/2}}e^{-\frac{(x-\xi)^2}{4k(\xi,\tau)(t-\tau)}}.
\end{align*}
Without loss of generality, fix $x_1<x_2$ and consider the intervals $$B_1:=|x_1-\xi|<|x_2-\xi| \mbox{ and } B_2:=|x_2-\xi|<|x_1-\xi|.$$
On $B_1$ we use
\begin{align*}
    &\big|H_x(x_2-\xi,t-\tau)-H_x(x_1-\xi,t-\tau)\big|\\
    &\leq \frac{1}{k^{3/2}_L}\bigg|\frac{x_1-x_2}{(t-\tau)^{3/2}}e^{-\frac{(x_2-\xi)^2}{4k_U(t-\tau)}}\bigg|+\frac{1}{k^{3/2}_L}\bigg|\frac{x_1-\xi}{(t-\tau)^{3/2}}\bigg[e^{-\frac{(x_1-\xi)^2}{4k(\xi,\tau)(t-\tau)}}-e^{-\frac{(x_2-\xi)^2}{4k(\xi,\tau)(t-\tau)}}\bigg]\bigg|,
\end{align*}
while on $B_2$ we use
\begin{align*}
    &\big|H_x(x_2-\xi,t-\tau)-H_x(x_1-\xi,t-\tau)\big|\\
    &\leq \frac{1}{k^{3/2}_L}\bigg|\frac{x_1-x_2}{(t-\tau)^{3/2}}e^{-\frac{(x_1-\xi)^2}{4k_U(t-\tau)}}\bigg|+\frac{1}{k^{3/2}_L}\bigg|\frac{x_2-\xi}{(t-\tau)^{3/2}}\bigg[e^{-\frac{(x_2-\xi)^2}{4k(\xi,\tau)(t-\tau)}}-e^{-\frac{(x_1-\xi)^2}{4k(\xi,\tau)(t-\tau)}}\bigg]\bigg|.
\end{align*}
We will consider the case on $B_1.$ The estimates on $B_2$ follow similarly.\\

Dividing the inequality by $|x_2-x_1|^\alpha$ we get

\begin{align}\nonumber
    &\frac{\big|H_x(x_2-\xi,t-\tau)-H_x(x_1-\xi,t-\tau)\big|}{|x_2-x_1|^\alpha}\\\label{betax}
    &\lesssim \frac{|x_2-x_1|^{1-\alpha}}{|t-\tau|^{3/2}}e^{-\frac{(x_2-\xi)^2}{4k_U(t-\tau)}}+\frac{|x_1-\xi|}{|t-\tau|^{3/2}|x_2-x_1|^\alpha}\bigg|e^{-\frac{(x_1-\xi)^2}{4k(\xi,\tau)(t-\tau)}}-e^{-\frac{(x_2-\xi)^2}{4k(\xi,\tau)(t-\tau)}}\bigg|.
\end{align}
Now for the principal part \eqref{nophi} we have two terms produced by \eqref{betax}. We start with the first term in \eqref{betax}:

\begin{align*}
    &\int_0^t\int_{B_1}\frac{|x_2-x_1|^{1-\alpha}}{|t-\tau|^{3/2}}e^{-\frac{(x_2-\xi)^2}{4k_U(t-\tau)}}|\square(\xi,\tau)|\,d\xi\,d\tau\\
    &\leq
    \underbrace{2\int_0^t\int_{B_1}\frac{|x_2-\xi|^{1-\alpha}}{|t-\tau|^{3/2}}e^{-\frac{(x_2-\xi)^2}{4k_U(t-\tau)}}|\square(\xi,\tau)|\,d\xi\,d\tau}_{I_\square}.
\end{align*}

For the second term in \eqref{betax} we have,

\begin{align}\nonumber
   &\int_0^t\int_{B_1}\frac{|x_1-\xi|}{|t-\tau|^{3/2}|x_2-x_1|^\alpha}\bigg|e^{-\frac{(x_1-\xi)^2}{4k(t-\tau)}}-e^{-\frac{(x_2-\xi)^2}{4k(t-\tau)}}\bigg||\square(\xi,\tau)|\,d\xi\,d\tau\\\nonumber
   &=\int_0^t\int_{B_1}\frac{|x_1-\xi|}{|t-\tau|^{3/2}|x_2-x_1|^\alpha}e^{-\frac{(x_1-\xi)^2}{4k(t-\tau)}}\bigg|1-e^{\frac{(x_1-\xi)^2}{4k(t-\tau)}-\frac{(x_2-\xi)^2}{4k(t-\tau)}}\bigg||\square(\xi,\tau)|\,d\xi\,d\tau\\\label{2-20}
   &=\int_0^t\int_{B_1}\frac{|x_1-\xi|}{|t-\tau|^{3/2}|x_2-x_1|^\alpha}e^{-\frac{(x_1-\xi)^2}{4k(t-\tau)}}\bigg|1-e^{-\frac{(x_2-x_1)(x_2+x_1-2\xi)}{4k(t-\tau)}}\bigg||\square(\xi,\tau)|\,d\xi\,d\tau.
\end{align}
Note that for $\xi\in B_1$ we have
\[\frac{(x_2-x_1)(x_2+x_1-2\xi)}{4k(\xi,\tau)(t-\tau)}>0.\] 
The following lemma will be needed,
\begin{lemma}\label{lemma-e}
For all $a\geq 0$ and $0<\lambda\leq 1,$ one has
\[1-e^{-a}\leq \frac{1}{\lambda} a^{\lambda}.\] 
\end{lemma}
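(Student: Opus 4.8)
\textbf{Proof proposal for Lemma \ref{lemma-e}.}

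The plan is to reduce the two-variable inequality to a one-variable calculus fact and then optimize. Fix $0<\lambda\le 1$ and consider the function $\phi(a):=\frac{1}{\lambda}a^\lambda-1+e^{-a}$ on $[0,\infty)$; the goal is to show $\phi\ge 0$ there. At $a=0$ we have $\phi(0)=0$, so it suffices to understand the sign of $\phi'$. Differentiating gives $\phi'(a)=a^{\lambda-1}-e^{-a}$, and since $\lambda-1\le 0$ we have $a^{\lambda-1}\ge 1$ for $0<a\le 1$ while $e^{-a}\le 1$, so $\phi'(a)\ge 0$ on $(0,1]$. For $a>1$ one no longer has $a^{\lambda-1}\ge 1$, so I would instead argue directly on that range.

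For $a\ge 1$ I would split off the elementary bound $1-e^{-a}\le 1$ (valid for all $a\ge 0$) and observe that $\frac{1}{\lambda}a^\lambda\ge \frac{1}{\lambda}\ge 1$ because $a^\lambda\ge 1$ when $a\ge 1$ and $\frac{1}{\lambda}\ge 1$ when $\lambda\le 1$. Hence $1-e^{-a}\le 1\le \frac{1}{\lambda}a^\lambda$ on $[1,\infty)$, and combined with the monotonicity argument on $[0,1]$ (where $\phi(0)=0$ and $\phi'\ge 0$ force $\phi\ge 0$) this covers all $a\ge 0$. Alternatively, and perhaps more cleanly, one can use the single global estimate $1-e^{-a}\le \min\{a,1\}\le a^\lambda$ together with $\frac{1}{\lambda}\ge 1$: indeed $1-e^{-a}\le a$ for all $a\ge 0$, and $1-e^{-a}\le 1$ for all $a\ge 0$, so $1-e^{-a}\le\min\{a,1\}$; since $\min\{a,1\}\le a^\lambda$ for every $a\ge 0$ when $0<\lambda\le 1$ (check the two cases $a\le 1$ and $a\ge 1$ separately), we get $1-e^{-a}\le a^\lambda\le\frac{1}{\lambda}a^\lambda$.

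The only real content is the inequality $\min\{a,1\}\le a^\lambda$ for $0<\lambda\le1$: for $a\le 1$ it says $a\le a^\lambda$, which holds since raising a number in $[0,1]$ to a smaller exponent increases it; for $a\ge 1$ it says $1\le a^\lambda$, which is immediate. I expect no genuine obstacle here — the statement is a routine convexity/monotonicity fact — the only thing to be careful about is that the factor $\frac{1}{\lambda}$, rather than being cosmetic, is exactly what is needed so that the same bound degrades gracefully as $\lambda\to 0$; without it the inequality $1-e^{-a}\le a^\lambda$ already holds but the version with $\frac1\lambda$ is the one that will be applied later with $\lambda$ tuned to the H\"older exponent, so I would keep the statement as given.
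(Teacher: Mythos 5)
Your proposal is correct, and both of your routes (the derivative-sign argument on $[0,1]$ plus the trivial bound on $[1,\infty)$, and the chain $1-e^{-a}\le\min\{a,1\}\le a^{\lambda}\le\frac{1}{\lambda}a^{\lambda}$) are sound: the only facts used are $1-e^{-a}\le a$, $1-e^{-a}\le 1$, $a\le a^{\lambda}$ on $[0,1]$, $a^{\lambda}\ge 1$ on $[1,\infty)$, and $\frac{1}{\lambda}\ge 1$, all of which are elementary and checked correctly. This is a genuinely different argument from the paper's: there the case $\lambda=1$ is handled separately by the mean value theorem, and for $0<\lambda<1$ the author studies $F(a)=\frac{1}{\lambda}a^{\lambda}+e^{-a}-1$ globally through its critical points, substitutes the critical-point relation $\tilde a^{\lambda-1}=e^{-\tilde a}$ to reduce to the auxiliary function $\tilde F(\tilde a)=\frac{1}{\lambda}\tilde a^{\lambda}+\tilde a^{\lambda-1}-1$, and then carries out a second optimization (minimum at $a^{*}=1-\lambda$) together with an algebraic verification that $\tilde F(1-\lambda)>0$. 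Your splitting avoids that nested optimization entirely, treats $\lambda=1$ uniformly with the other cases, and in fact yields the sharper statement $1-e^{-a}\le a^{\lambda}$, of which the lemma as stated is a weakening by the harmless factor $\frac{1}{\lambda}\ge 1$; the paper's version buys nothing extra in strength, only a self-contained calculus derivation in the form the author chose. Since the lemma is applied in \eqref{II,III} only through the bound with $\lambda=\frac12-\epsilon$ fixed and the constant absorbed into $\lesssim$, your simpler proof serves exactly the same purpose.
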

 \begin{proof}
 The case $\lambda=1$ can be shown straightforwardly by the mean value theorem. For $0<\lambda<1,$ define $F(a)=\frac{1}{\lambda} a^{\lambda}+e^{-a}-1.$ This is a continuous function on $[0,\infty)$ for any $\lambda\in (0,1).$ Since $F(0)=0$ and $\lim_{a\to\infty}F(a)=\infty.$ It suffices to show that the values of $F$ at local extrema are all non-negative.\\
 
If $F$ has no extrema then it is monotone and we are done. Otherwise, suppose $\tilde a > 0$ is a local extremum of $F,$ that is $F'(\tilde a)=0,$ which gives
 \[{\tilde a}^{\lambda-1}=e^{-{\tilde a}}.\]
Now we have,
 \[F(\tilde a)=\frac{1}{\lambda} \tilde{a}^{\lambda}+{\tilde a}^{\lambda-1}-1:=\tilde F.\]
We claim that $\tilde F>0$ for all $\tilde a>0$ and hence if $\tilde a$ is a local minimum then $F(\tilde a)$ is  still positive. To show that we notice,
 \[\tilde F\to\infty,\] as $\tilde a\to 0^+,$ and 
 \[\tilde F\to \infty,\] as $\tilde a\to \infty.$ Moreover, there is only one extremum point $a^*$ that is
 \[{a^{*}}^{\lambda-1}+(\lambda-1){a^*}^{\lambda-2}=0,\]
 which gives
 \[a^*=1-\lambda.\] It must be a global minimum and finally
 \[\tilde F(1-\lambda)=\frac{1}{\lambda}(1-\lambda)^\lambda+(1-\lambda)^{\lambda-1}-1>0\] if and only if
 \[\frac{1-\lambda}{\lambda}+1-(1-\lambda)^{1-\lambda}>0,\]
 if and only if
 \[1-\lambda(1-\lambda)^{1-\lambda}>0,\] which is clearly true since $0<\lambda<1.$
 \end{proof}
Now using Lemma \ref{lemma-e} for \eqref{2-20} with $\lambda=\frac{1}{2}-\epsilon,$ for some $\epsilon$ to be determined later, and 
\[a=\frac{(x_2-x_1)(x_2+x_1-2\xi)}{4k(\xi,\tau)(t-\tau)},\] we have
\begin{align}\label{II,III}\nonumber
    &\int_0^t\int_{B_1}\frac{|x_1-\xi|}{|t-\tau|^{3/2}|x_2-x_1|^\alpha}e^{-\frac{(x_1-\xi)^2}{4k(t-\tau)}}\bigg|1-e^{-\frac{(x_2-x_1)(x_2+x_1-2\xi)}{4k(t-\tau)}}\bigg||\square(\xi,\tau)|\,d\xi\,d\tau\\\nonumber
    &\lesssim \int_0^t\int_{B_1}\frac{|x_1-\xi|}{|t-\tau|^{3/2}|x_2-x_1|^\alpha}e^{-\frac{(x_1-\xi)^2}{4k_U(t-\tau)}}\bigg|\frac{(x_2-x_1)(x_2+x_1-2\xi)}{4(t-\tau)}\bigg|^{\frac{1}{2}-\epsilon}|\square(\xi,\tau)|\,d\xi\,d\tau\\\nonumber
    &\leq\int_0^t\int_{B_1}\frac{|x_1-\xi|}{|t-\tau|^{3/2}|x_2-x_1|^\alpha}e^{-\frac{(x_1-\xi)^2}{4k_U(t-\tau)}}\frac{(x_2-x_1)^{1-2\epsilon}+[2(x_2-x_1)|x_1-\xi|]^{\frac{1}{2}-\epsilon}}{[4(t-\tau)]^{\frac{1}{2}-\epsilon}}|\square(\xi,\tau)|\,d\xi\,d\tau\\\nonumber
    &\lesssim \int_0^t\int_{B_1}\frac{|x_1-\xi|}{|t-\tau|^{2-\epsilon}}e^{-\frac{(x_1-\xi)^2}{4k_U(t-\tau)}}{(x_2-x_1)^{1-2\epsilon-\alpha}}|\square(\xi,\tau)|\,d\xi\,d\tau\\
    &+\int_0^t\int_{B_1}\frac{|x_1-\xi|^{\frac{3}{2}-\epsilon}}{|t-\tau|^{2-\epsilon}}e^{-\frac{(x_1-\xi)^2}{4k_U(t-\tau)}}{(x_2-x_1)^{\frac{1}{2}-\epsilon-\alpha}}|\square(\xi,\tau)|\,d\xi\,d\tau\lesssim II_\square+III_\square.
\end{align}    
where $II_\square$ and $III_\square$ are given by
\begin{align*}
    II_\square:= \int_0^t\int_{B_1}\frac{|x_1-\xi|}{|t-\tau|^{2-\epsilon}}e^{-\frac{(x_1-\xi)^2}{4k_U(t-\tau)}}&|\square(\xi,\tau)|\,d\xi\,d\tau,
\end{align*}
\begin{align*}
    III_\square:=\int_0^t\int_{B_1}\frac{|x_1-\xi|^{\frac{3}{2}-\epsilon}}{|t-\tau|^{2-\epsilon}}e^{-\frac{(x_1-\xi)^2}{4k_U(t-\tau)}}|\square(\xi,\tau)|\,d\xi\,d\tau.
\end{align*}
The last inequality in \eqref{II,III} follows from the requirements $1-2\epsilon-\alpha>0$ and $\frac{1}{2}-\epsilon-\alpha>0.$ Note that the second requirement implies the first. Now, we choose
\begin{align}\label{eps-range}
 \epsilon<\frac{1}{2}-\alpha.   
\end{align}


 


Finally, we have the "lower order term" \eqref{wphi}.

\begin{align}
IV_\square:=&\int_0^t\int_\R\int_\tau^t\int_{B_1}\frac{\big|H_x(x_2-y,t-s)-H_x(x_1-y,t-s)\big|}{|x_2-x_1|^\alpha}|\Phi(y,s;\xi,\tau)|\,dy\,ds\,|\square(\xi,\tau)|\,d\xi\,d\tau\\
&=\int_0^t\int_\R\int_\tau^t\int_{B_1}\|H_x(x-y,t-s)\|_{C^\alpha_x}|\Phi(y,s;\xi,\tau)|\,dy\,ds\,|\square(\xi,\tau)|\,d\xi\,d\tau,
\end{align}
where, for short, we write
\[\|H_x(x-y,t-s)\|_{C^\alpha_x}:=\frac{\big|H_x(x_2-y,t-s)-H_x(x_1-y,t-s)\big|}{|x_2-x_1|^\alpha}.\]
Therefore, to estimate the $C^\alpha_x$ norm of $M_\square(x,t)$ it suffices to bound the integrals

\[I_\square, II_\square, III_\square\,\,\,\text{and}\,\,\, IV_\square,\] 
for $\square=\{G,x\}$ and $\{f,x\}.$\\

\begin{remark}
Notice that if $\alpha>0$ is the H\"older exponent of $H_x(x,t)$ with respect to $x$ then $\alpha/2$ is the H\"older exponent of $H_x(x,t)$ with respect to $t.$ This is due to the scaling difference of the $x$ and $t$ variables of the heat kernel $H,$ that is 
\[(H_x)_t=(H_x)_{xx}.\]
\end{remark}

To this end, recall

\begin{align*}
    I_\square&=\int_0^t\int_{B_1}\frac{|x_2-\xi|^{1-\alpha}}{|t-\tau|^{3/2}}e^{-\frac{(x_2-\xi)^2}{4k_U(t-\tau)}}|\square(\xi,\tau)|\,d\xi\,d\tau,\\
    II_\square&=\int_0^t\int_{B_1}\frac{|x_1-\xi|}{|t-\tau|^{2-\epsilon}}e^{-\frac{(x_1-\xi)^2}{4k_U(t-\tau)}}|\square(\xi,\tau)|\,d\xi\,d\tau,\\
    III_\square&=\int_0^t\int_{B_1}\frac{|x_1-\xi|^{\frac{3}{2}-\epsilon}}{|t-\tau|^{2-\epsilon}}e^{-\frac{(x_1-\xi)^2}{4k_U(t-\tau)}}|\square(\xi,\tau)|\,d\xi\,d\tau,
\end{align*}
and
\begin{align*}
IV_\square:=\int_0^t\int_\R\int_\tau^t\int_{B_1}\|H_x(x-y,t-s)\|_{C^\alpha_x}|\Phi(y,s;\xi,\tau)|\,dy\,ds\,|\square(\xi,\tau)|\,d\xi\,d\tau.
\end{align*}

\subsubsection{\textbf{The $C^\alpha$ estimate for $W_{G,x}$}} We consider the term
\begin{align}
W_{G,x}:=\int_0^t\int_{B_1}\Gamma_x(x,t;\xi.\tau)G(\xi,\tau)\,d\xi\,d\tau.
\end{align}
Using the u-substitution $u:=\frac{x_2-\xi}{\sqrt{k_U}\sqrt{t-\tau}},$ we write
\begin{align}\nonumber
    I_{G,x}&\lesssim \|G\|_{L^\infty(\Omega_T)}\int_0^t\int_{B_1}\frac{|u|^{1-\alpha}|t-\tau|^{1-\frac{\alpha}{2}}}{{|t-\tau|^{3/2}}}\,e^{-|u|^2}\,du\,d\tau\\\label{c1}
    &\leq C\, \|G\|_{L^\infty(\Omega_T)}\int_0^t\frac{1}{|t-\tau|^{1/2+\alpha/2}}\,d\tau=C\, \|G\|_{L^\infty(\Omega_T)}\,t^{1/2-\alpha/2}.
\end{align}
Using the u-substitution $u:=\frac{x_1-\xi}{\sqrt{k_U}\sqrt{t-\tau}},$ we write
\begin{align}\nonumber
    II_{G,x}&\lesssim \|G\|_{L^\infty(\Omega_T)}\int_0^t\int_{B_1}\frac{|u|}{{|t-\tau|^{1-\epsilon}}}\,e^{-|u|^2}\,du\,d\tau\\\label{c2}
    &\leq C \, \|G\|_{L^\infty(\Omega_T)} t^{\epsilon}.
\end{align}
Similarly, 
\begin{align}\nonumber
    III_{G,x}&\lesssim \|G\|_{L^\infty(\Omega_T)}\int_0^t\int_{B_1}\frac{|u|^{\frac{3}{2}-\epsilon}}{{|t-\tau|^{\frac{3}{4}-\frac{\epsilon}{2}}}}\,e^{-|u|^2}\,du\,d\tau\\\label{c3}
    &\leq C \, \|G\|_{L^\infty(\Omega_T)} t^{\frac{1}{4}+\frac{\epsilon}{2}},
\end{align}
and
\begin{align}\nonumber
    IV_{G,x}&\lesssim \,\|G\|_{L^\infty(\Omega_T)}\int_0^t\int_{B_1}\bigg[\int_0^s\frac{d\tau}{(s-\tau)^{3/4}}\bigg]\|H_x(x-y,t-s)\|_{C^\alpha_x}\,dy\,ds\\\nonumber
    &\leq\|G\|_{L^\infty(\Omega_T)}\,t^{1/4}\int_0^t\int_{B_1}\frac{\big|H_x(x_2-y,t-s)-H_x(x_1-y,t-s)\big|}{|x_2-x_1|^\alpha}\,dy\,ds\\
    &\leq C\,\|G\|_{L^\infty(\Omega_T)}\,t^{1/4}\,\max\{t^{\frac{1}{2}-\alpha/2},t^\epsilon,t^{\frac{1}{4}+\frac{\epsilon}{2}}\}.
\end{align}

\subsubsection{\textbf{The $C^\alpha$ estimate for $W_{f,x}$}} We consider the term
\begin{align}
    W_{f,x}:=\int_0^t\int_\R\Gamma_x(x,t;\xi.\tau)f(\xi,\tau)\,d\xi\,d\tau.
\end{align}
Recall,
\begin{align*}
    I_{f,x}&=\int_0^t\int_{B_1}\frac{|x_2-\xi|^{1-\alpha}}{|t-\tau|^{3/2}}e^{-\frac{(x_2-\xi)^2}{4k_U(t-\tau)}}|f(\xi,\tau)|\,d\xi\,d\tau,\\
    II_{f,x}&=\int_0^t\int_{B_1}\frac{|x_1-\xi|}{|t-\tau|^{2-\epsilon}}e^{-\frac{(x_1-\xi)^2}{4k_U(t-\tau)}}|f(\xi,\tau)|\,d\xi\,d\tau,\\
    III_{f,x}&=\int_0^t\int_{B_1}\frac{|x_1-\xi|^{\frac{3}{2}-\epsilon}}{|t-\tau|^{2-\epsilon}}e^{-\frac{(x_1-\xi)^2}{4k_U(t-\tau)}}|f(\xi,\tau)|\,d\xi\,d\tau,
\end{align*}
and
\begin{align*}
    IV_{f,x}=\int_0^t\int_{B_1}\bigg[\int_0^s\int_\R\frac{const}{(s-\tau)^{5/4}}e^{\frac{-d(y-\xi)^2}{4(s-\tau)}}\,|f(\xi,\tau)|\,d\xi\,d\tau\bigg]\|H_x(x-y,t-s)\|_{C^\alpha_x}\,dy\,ds.
\end{align*}
We start with $I_{f,x},$
\begin{align}\nonumber
    I_{f,x}&\leq \bigg(\int_0^t\int_{B_1}\frac{|x_2-\xi|^{2-2\alpha}}{|t-\tau|^{3-2r}}e^{-\frac{(x_2-\xi)^2}{2k_U(t-\tau)}}\,d\xi\,d\tau\bigg)^{1/2}\bigg(\int_0^t\int_{B_1}\frac{1}{|t-\tau|^{2r}}|f(\xi,\tau)|^2\,d\xi\,d\tau\bigg)^{1/2}\\\nonumber
    &\lesssim\bigg(\int_0^t\int_{B_1}\frac{|u|^{2-2\alpha}|t-\tau|^{3/2-\alpha}}{|t-\tau|^{3-2r}}e^{-u^2}\,du\,d\tau\bigg)^{1/2}\bigg(\int_0^t\int_{B_1}\frac{1}{|t-\tau|^{2r}}|f(\xi,\tau)|^2\,d\xi\,d\tau\bigg)^{1/2}\\\nonumber
    &\lesssim \|f\|_{L^\infty((0,T),L^2(\R))} \bigg(\int_0^t\frac{1}{|t-\tau|^{\frac{3}{2}-2r+\alpha}}\,d\tau\bigg)^{1/2}\bigg(\int_0^t\int_{B_1}\frac{1}{|t-\tau|^{2r}}\,d\xi\,d\tau\bigg)^{1/2}\\
    &\lesssim \|f\|_{L^\infty((0,T),L^2(\R))} t^{-\frac{1}{4}+r-\frac{\alpha}{2}}\,t^{\frac{1}{2}-r}=t^{\frac{1}{4}-\frac{\alpha}{2}}\,\|f\|_{L^\infty((0,T),L^2(\R))} ,
\end{align}
for $\alpha+\frac{1}{4}<r<\frac{1}{2}.$ Similarly,
\begin{align}\nonumber
    II_{f,x}&\leq \bigg(\int_0^t\int_{B_1}\frac{|x_1-\xi|^{2}}{|t-\tau|^{4-2\epsilon-2r}}e^{-\frac{(x_1-\xi)^2}{2k_U(t-\tau)}}\,d\xi\,d\tau\bigg)^{1/2}\bigg(\int_0^t\int_{B_1}\frac{1}{|t-\tau|^{2r}}|f(\xi,\tau)|^2\,d\xi\,d\tau\bigg)^{1/2}\\\nonumber
    &\lesssim\bigg(\int_0^t\int_{B_1}\frac{|u|^{2}}{|t-\tau|^{\frac{5}{2}-2\epsilon-2r}}e^{-u^2}\,du\,d\tau\bigg)^{1/2}\bigg(\int_0^t\int_{B_1}\frac{1}{|t-\tau|^{2r}}|f(\xi,\tau)|^2\,d\xi\,d\tau\bigg)^{1/2}\\\nonumber
    &\lesssim \|f\|_{L^\infty((0,T),L^2(\R))} \bigg(\int_0^t\frac{1}{|t-\tau|^{\frac{5}{2}-2\epsilon-2r}}\,d\tau\bigg)^{1/2}\bigg(\int_0^t\int_{B_1}\frac{1}{|t-\tau|^{2r}}\,d\xi\,d\tau\bigg)^{1/2}\\
    &\lesssim \|f\|_{L^\infty((0,T),L^2(\R))} t^{-\frac{3}{4}+\epsilon+r}\,t^{\frac{1}{2}-r}= t^{-\frac{1}{4}+\epsilon}\,\|f\|_{L^\infty((0,T),L^2(\R))},
\end{align}
for $\frac{3}{4}-\epsilon<r<\frac{1}{2}.$
Note here we  need $\epsilon>1/4.$ This with the previous condition \eqref{eps-range} on $\epsilon$ we get \[\frac{1}{4}<\epsilon<\frac{1}{2}-\alpha.\]
By similar estimates, we have
\begin{align}\nonumber
    III_{f,x}&\leq \bigg(\int_0^t\int_{B_1}\frac{|x_1-\xi|^{3-2\epsilon}}{|t-\tau|^{4-2\epsilon-2r}}e^{-\frac{(x_1-\xi)^2}{2k_U(t-\tau)}}\,d\xi\,d\tau\bigg)^{1/2}\bigg(\int_0^t\int_{B_1}\frac{1}{|t-\tau|^{2r}}|f(\xi,\tau)|^2\,d\xi\,d\tau\bigg)^{1/2}\\\nonumber
    &\lesssim\bigg(\int_0^t\int_{B_1}\frac{|u|^{3-2\epsilon}}{|t-\tau|^{2-\epsilon-2r}}e^{-u^2}\,du\,d\tau\bigg)^{1/2}\bigg(\int_0^t\int_{B_1}\frac{1}{|t-\tau|^{2r}}|f(\xi,\tau)|^2\,d\xi\,d\tau\bigg)^{1/2}\\\nonumber
    &\lesssim \|f\|_{L^\infty((0,T),L^2(\R))} \bigg(\int_0^t\frac{1}{|t-\tau|^{2-\epsilon-2r}}\,d\tau\bigg)^{1/2}\bigg(\int_0^t\int_{B_1}\frac{1}{|t-\tau|^{2r}}\,d\xi\,d\tau\bigg)^{1/2}\\
    &\lesssim \|f\|_{L^\infty((0,T),L^2(\R))} t^{-\frac{1}{2}+\frac{\epsilon}{2}+r}\,t^{\frac{1}{2}-r}= t^{\epsilon/2}\,\|f\|_{L^\infty((0,T),L^2(\R))},
\end{align}
for $\frac{1}{2}-\frac{\epsilon}{2}<r<\frac{1}{2}.$
For the lower order term
\begin{align}\nonumber
    IV_{f,x}=&\int_0^t\int_\R\int_\tau^t\int_{B_1}\|H_x(x-y,t-s)\|_{C^\alpha_x}|\Phi(y,s;\xi,\tau)|\,dy\,ds\,|f(\xi,\tau)|\,d\xi\,d\tau\\
    &\leq \int_0^t\int_\R\int_\tau^t\int_{B_1}\|H_x(x-y,t-s)\|_{C^\alpha_x}\frac{const}{(s-\tau)^{5/4}}e^{\frac{-d(y-\xi)^2}{4(s-\tau)}}
\,dy\,ds\,|f(\xi,\tau)|\,d\xi\,d\tau.
\end{align}
Making the change of variables: $y= u+\xi$ and $s= v+\tau$ we obtain
\begin{align}
   =\int_0^t\int_\R\int_0^{t-\tau}\int_{\tilde B_1}\|H_x(x-u-\xi,t-v-\tau)\|_{C^\alpha_x}\frac{const}{(v)^{5/4}}e^{\frac{-d(u)^2}{4(v)}}|f(\xi,\tau)|
\,dV
\end{align}
where $dV=du\,dv\,\,d\xi\,d\tau.$ Similarly, we denote \[\|H_x(x-u-\xi,t-s)\|_{C^\alpha_x}:=\frac{\big|H_x(x_2-u-\xi,t-v-\tau)-H_x(x_1-u-\xi,t-v-\tau)\big|}{|x_2-x_1|^\alpha}.\]
Interchanging the integrals we get
\begin{align}\nonumber
   =&\int_0^t\int_\R\bigg[\int_0^{t-v}\int_\R \|H_x(x-u-\xi,t-s)\|_{C^\alpha_x}\,|f(\xi,\tau)|\,d\xi\,d\tau\bigg]\frac{const}{(v)^{5/4}}e^{-d\frac{u^2}{4v}}
\,du\,dv.\\\nonumber
&\leq \max\{t^{\frac{1}{4}-\frac{\alpha}{2}},t^{-\frac{1}{4}+\epsilon},t^{\epsilon/2}\}\|f\|_{L^\infty((0,T),L^2(\R))}\int_0^t\int_\R\frac{const}{(v)^{5/4}}e^{-d\frac{u^2}{4v}}
\,du\,dv\\
&\lesssim t^{3/4}\,\max\{t^{\frac{1}{4}-\frac{\alpha}{2}},t^{-\frac{1}{4}+\epsilon},t^{\epsilon/2}\}\|f\|_{L^\infty((0,T),L^2(\R))}.
\end{align}

Altogether, we obtain that $W_{G,x},W_{f,x}\in C^\alpha_x(\overline\Omega_T)$ for $\alpha\in(0,1/2).$ Moreover, due to the specific scaling of space and time in the heat kernel, that is \[(H)_t=(H)_{xx},\] we have that
\[W_{G,x},W_{f,x}\in C^\alpha_t(\overline\Omega_T),\] for $\alpha\in(0,1/4).$

\subsection{\textbf{The initial data}} We will show

First, note that $W(x,t)$ satisfies, in a classical sense, the Cauchy problem
\begin{align}\label{wsolinit}
    \begin{split}
     w_t-k(x,t)w_{xx}+\gamma w=0,\\
     w(x,0)=w_0(x)\in H^1(\R).   
    \end{split}
\end{align}
The calculation to show $W(x,t)\to w_0(x)$ is similar to the calculation in \cite{Fri} (Chapter 1, Theorem 11).

To show that $W_x(x,t)\to w'_{0}(x)$ we use standard second order parabolic equations theory. Denote $w_x:=u,$ using \eqref{wsolinit} we write the equation satisfied by $u$
\begin{align}\label{usolinit}
\begin{split}
    u_t-\big(k(&x,t)u_x\big)_x+\gamma u=0\\
    u(x,0)&=u_0(x)\in L^2(\R).
\end{split} 
\end{align}
It is known that the solution $u$ to \eqref{usolinit} belongs to $C([0,T],L^2(\R))$ and $u(x,0)= u_0(x)$ \cite{Evans}. Hence, by definition of $u$ we obtain $W_x(x,0)=w'_0(x).$

Finally, we emphasize that the regularity of the initial data $w_0(x)$ assumed gives that $W(x,t)$ and $W_x(x,t)$ are also H\"older continuous with the same exponent $\alpha$ away from $t=0.$ The calculations are quite standard and similar to the proof above so we omit this part.\\

In the next section, we show the existence of weak solution as a limit with respect to a mollifier's parameter. We will make use of the $L^\infty$ estimates of $w$ and $w_x.$


\section{\textbf{Global existence of weak solution}}

In this section we use standard results regarding mollifying functions with specific properties. Detailed proofs can be found in the appendix of \cite{Evans}.
Recall \eqref{source} and Remark \ref{G}, $$f(x,t)\in L^2(\Omega_T) \quad\text{and}\quad G(x,t)=\int_{-\infty}^x f^2(z,t)\,dz\in L^\infty(\Omega_T).$$ Denote the mollification of $f$ by $f^\ve.$ Therefore, the mollification of $G$ is 
\[G^\ve=\int_{-\infty}^x{f^\ve}^2\,dz.\]
Since both $f$ and $G$ are $L^2_{loc},$
We know that
$$f^\ve\to f \quad \text{and}\quad G^\ve\to G \mbox{ in } L^2_{loc} \mbox{ as } \ve\to 0.$$
Following the discussion in \S 1.2, the Cauchy problem
\begin{align}\label{chye}
  w^\ve_t-k(x,t)w^\ve_{xx}+\gamma w^\ve&=G^\ve(x,t)+f^\ve(x,t),\\\label{inite}
w(x,0)&=w_0,  
\end{align}
has a classical solution given by
\begin{align}
w^\ve(x,t)=\int_\R\Gamma(x,t,\xi,0)w_0(\xi)\,d\xi+\int_0^t\int_\R\Gamma(x,t;\xi.\tau)[G^\ve(\xi,\tau)+f^\ve(\xi,\tau)]\,d\xi\,d\tau.
\end{align}
Therefore, the following identity holds
\begin{align}
        \int_0^T\int_\R-\phi_t w^\ve +(k\phi)_x w^{\ve}_x+\gamma \phi w^\ve\,dx\,dt=\int_0^T\int_\R(f^\ve+G^\ve)\phi\,dx\,dt,
    \end{align}
    for any $\phi\in C_c^1(\Omega_T).$ Using the $L^\infty$ estimates of $w,w_x$ in Section 2, we take $\ve\to 0$ to obtain
\begin{align}
        \int_0^T\int_\R-\phi_t w+(k\phi)_x w_x+\gamma \phi w\,dx\,dt=\int_0^T\int_\R(f+G)\phi\,dx\,dt,
    \end{align}
   for any $\phi\in C_c^1(\Omega_T).$


\section{\textbf{Proof of Proposition \ref{prop}}}

In this section, we show the $L^2$ estimates for $ W_{G,x}, W_{f,x}$ and $W_{f}.$

\subsection{\textbf{The $L^2$ estimate for $W_{G,x}$}}

We treat the term
\begin{align*}
W_{G,x}:=\int_0^t\int_\R\Gamma_x(x,t;\xi.\tau)G(\xi,\tau)\,d\xi\,d\tau.
\end{align*}
Recall,
\begin{align*}
W_G:=\int_0^t\int_{\R}\Gamma(x,t,\xi,\tau)G(\xi,\tau)\,d\xi\,d\tau
\end{align*}

Moreover, by the discussion in {Section} 1 we define $$W^\ve_G:=\int_0^t\int_{\R}\Gamma(x,t,\xi,\tau)G^\ve(\xi,\tau)\,d\xi\,d\tau$$ where $G^\ve$ is the mollification of $G.$ We have the following weak formulation satisfied
\begin{align}
  \int\int W^\ve_G\phi_t-(\phi\,k)_x\,W^\ve_{G,x}-\gamma_1 W^\ve_G\,\phi\,dx\,dt=-\int\int G^\ve\phi\,dx\,dt.  
\end{align}
for any $\phi\in C^1_0(\Omega_T).$
In fact, $W^\ve_G$ is a classical solution to the equation corresponding to the weak formulation. Choose $\phi=\eta_x,$
\begin{align}
  \int\int -W^\ve_{G,x}\eta_t+\eta_x\,k\,W^\ve_{G,xx}+\gamma_1 W^\ve_{G,x}\,\eta\,dx\,dt=\int\int G^\ve_{x}\eta\,dx\,dt.  
\end{align}
It is easy to see that $W^\ve_{G,x}$ satisfies
\begin{align}\label{Bve}
    (W^\ve_{G,x})_t-\big(k(x,t)(W^\ve_{G,x})_x\big)_x+\gamma_1 W^\ve_{G,x}=G^\ve_x
\end{align}
in a classical sense. Now, for short we denote $W^\ve_{G,x}$ by $B^\ve_x.$ Multiplying \eqref{Bve} by $B^\ve_x$ and integrating,
\begin{align}
   \frac{1}{2}\frac{d}{dt}\int_\R (B^\ve_x)^2\,dx+\int_\R k(x,t)(B^\ve_{xx})^2+\gamma_1(B^\ve_x)^2\,dx=\int_\R G^\ve_xB^\ve_x\,dx.
\end{align}
Integrating in $t,$
\begin{align}\label{B^ve}
   \frac{1}{2}\int_\R (B^\ve_x)^2\,dx+\int_0^t\int_\R k(x,s)(B^\ve_{xx})^2+\gamma_1(B^\ve_x)^2\,dx\,ds=\int_0^t\int_\R G^\ve_xB^\ve_x\,dx\,ds.
\end{align}
 Now the given form of $G,$ that is $G_x=f^2,$ implies that $G^\ve_x\to G_x$ in $L^1.$ and we also have, from the previous sections, $B^\ve_x\to B_x$ in $L^\infty.$ Together, we obtain for small $\ve,$
 \[\int_0^t\int_\R G^\ve_xB^\ve_x\,dx\leq \frac{1}{2} \int_0^t\int_\R G_xB_x\,dx\leq\frac{1}{2}\,T\, \|B_x\|_{L^\infty([0,T],\R)}\|G_x\|_{L^\infty([0,T],L^1(\R))}.\] Hence, by \eqref{B^ve}  and \eqref{infityM_Gx} we have
 \begin{align}
    \|B^\ve_x\|_{L^2([0,T],\R)}\leq \frac{1}{2}T^{3/2} \|G\|_{L^\infty([0,T],\R)}\|G_x\|_{L^\infty([0,T],L^1(\R))}.
 \end{align}
  Since $B^\ve_x\to B_x$ in $L^\infty,$ there exists a subsequence, still denoted by $B^\ve_x,$ converges weakly to $B_x$ in $L^2.$ Hence, $W_{G,x}\in L^2$ and 
  $$\|W_{G,x}\|_{L^2([0,t],\R)}\leq \frac{1}{2}T^{3/2} \|G\|_{L^\infty([0,T],\R)}\|G_x\|_{L^\infty([0,T],L^1(\R))}.$$

\subsection{\textbf{The $L^2$ estimate for $W_{f,x}$ and $W_f$}} We consider the term
\begin{align}
    W_{f,x}:=\int_0^t\int_\R\Gamma_x(x,t;\xi.\tau)f(\xi,\tau)\,d\xi\,d\tau,
\end{align}
and use Young's inequality with the help of \eqref{Gamma_xest} we get,
\begin{align}
  \|W_{f,x}\|_{L^2}&\leq \bigg\|\int_0^t\int_\R\frac{cont}{{t-\tau}}e^{-\frac{d(x-\xi)^2}{4(t-\tau)}}|f(\xi,\tau)|\,d\xi\,d\tau\bigg\|_{L^2}\\
&\approx\|\tilde H\ast f\|_{L^2}\leq \|\tilde H\|_{L^{1}}\|f\|_{L^2}\leq T^{1/2}\|f\|_{L^\infty((0,T),L^2(\R))}, 
\end{align}
where $\tilde H=\frac{1}{\sqrt{t}}H.$ Similarly, one can show the $L^2$ of $W_f.$\\

\textbf{Acknowledgement}: I would like to thank my advisors Prof. Geng Chen and Prof. Weishi Liu for their valuable comments and guidance. This paper is motivated by a project that we have conducted recently. I would also like to thank Prof. Lihe Wang for referring me to some of the relevant previous results in the literature. 

\newpage


\end{document}